\newcommand{\calH}{\mathcal{H}}
\newcommand{\calW}{\mathcal{W}}
\newcommand{\Prob}{\mathbf{P}}
\renewcommand{\o}{\overline}
\newcommand{\sB}{\mathscr{B}}
\newcommand{\sC}{\mathscr{C}}
\theoremstyle{plain}
\newtheorem{theorem}{Theorem}
\newtheorem*{theorem*}{Theorem}
\newtheorem{proposition}[theorem]{\textbf{Proposition}}
\newtheorem{corollary}[theorem]{\textbf{Corollary}}
\newtheorem*{conjecture*}{Conjecture}
\theoremstyle{definition}
\theoremstyle{remark}
\newtheorem*{remark}{Remark}
\newtheorem*{remarks}{Remarks}
\numberwithin{theorem}{section} \numberwithin{equation}{section}
\begin{document}

\title{A partition identity and the universal mock theta function $g_2$}
\author{Kathrin Bringmann}
\address{Mathematical Institute\\University of
Cologne\\ Weyertal 86-90 \\ 50931 Cologne \\Germany}
\email{kbringma@math.uni-koeln.de}
\author{Jeremy Lovejoy}
\address{CNRS \\ LIAFA \\
Universite Denis Diderot - Paris 7 \\
75205 Paris Cedex 13 \\
FRANCE }
\email{lovejoy@liafa.univ-paris-diderot.fr}
\author{Karl Mahlburg}
\address{Department of Mathematics \\
Louisiana State University \\
Baton Rouge, LA 70803\\ U.S.A.}
\email{mahlburg@math.lsu.edu}
\thanks{The research of the first author was supported by the Alfried Krupp Prize for Young University Teachers of the Krupp Foundation and the research leading to these results has received funding from the European Research Council under the European Union's Seventh Framework Programme (FP/2007-2013) / ERC Grant agreement n. 335220 - AQSER.  The third author was supported by NSF Grant DMS-1201435.}

\begin{abstract}
We prove analytic and combinatorial identities reminiscent of Schur's classical partition theorem. Specifically, we show that certain families of overpartitions whose parts satisfy gap conditions are equinumerous with partitions whose parts satisfy congruence conditions.
Furthermore, if small parts are excluded, the resulting overpartitions are generated by the product of a modular form and Gordon and McIntosh's universal mock theta function.  Finally, we give an interpretation for the universal mock theta function at real arguments in terms of certain conditional probabilities.

\end{abstract}

\maketitle

\section{Introduction and statement of results}
\label{S:Intro}
\subsection{Background and motivation}
This paper is motivated by recent results of the first and third authors \cite{Br-Ma} on partitions related to a classical theorem of Schur.   We begin by recalling Schur's theorem.

By a {\it partition} $\lambda$ of $n$ we mean a non-decreasing sequence of integer parts $1 \leq \lambda_1 \leq \cdots \leq \lambda_k$ that sum to $n$; see \cite{And98} for further background.  Throughout the paper we assume that $d \geq 3$ and $1 \leq r < \frac{d}{2}$. For all $n \geq 0$, let $B_{d,r}(n)$ denote the number of partitions of $n$ into parts congruent to $r$, $d-r$, or $d \pmod{d}$ such that $\lambda_{i+1} - \lambda_i \geq d$ with strict inequality if $d \mid \lambda_{i+1}$.  Let $E_{d,r}(n)$ denote the number of partitions of $n$ into distinct parts that are congruent to $\pm r \pmod{d}$.  Schur's theorem is the following.
\begin{theorem*}[Schur, \cite{Sch26}]
For all $n \geq 0$ we have $B_{d,r}(n) = E_{d,r}(n)$.
\end{theorem*}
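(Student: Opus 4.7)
The plan is to reduce Schur's theorem to the single generating function identity
\begin{equation*}
\sum_{n \geq 0} B_{d,r}(n)\, q^n \;=\; (-q^r; q^d)_\infty \,(-q^{d-r}; q^d)_\infty,
\end{equation*}
since the right-hand side is plainly the generating function for $E_{d,r}(n)$: each part $\equiv \pm r \pmod d$ appears at most once and contributes a factor $(1+q^k)$.

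For the left-hand side I would use the classical $q$-difference equation method. Refine $B_{d,r}(n)$ by the number of parts and form
\begin{equation*}
f(x) := \sum_{m,n \geq 0} B_{d,r}(m,n)\, x^m q^n,
\end{equation*}
where $B_{d,r}(m,n)$ counts the admissible partitions of $n$ with exactly $m$ parts. Conditioning on the smallest part, which must equal $r$, $d-r$, or a multiple of $d$, and subtracting appropriate shifts from the remaining parts (so that the gap conditions descend to an admissible partition with one fewer part), one derives a functional equation for $f$. The strict-inequality clause attached to multiples of $d$ contributes an extra power of $q^d$ in the corresponding case and couples $f(x)$ to both $f(xq^d)$ and $f(xq^{2d})$. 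After massaging this into a first-order recurrence of the form $f(x) = P(x,q) f(xq^d)$, iteration together with the initial condition $f(0) = 1$ should produce an explicit infinite product, and evaluating at $x=1$ should reproduce the target.

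The main obstacle is the careful bookkeeping needed to encode the strict gap when $d \mid \lambda_{i+1}$; without this clause the three residue classes $r$, $d-r$, $0 \pmod d$ essentially decouple and the product factorization is immediate. The condition $r < d/2$ enters to guarantee $r \neq d-r$, so that the two nonzero residue classes remain genuinely distinct in the product on the right. A combinatorial alternative, in the Alladi--Gordon spirit, would be to recode each part $\equiv 0 \pmod d$ as a pair of smaller parts summing to it, one from each of the classes $r$ and $d-r$; verifying that this recoding is a well-defined bijection between $B_{d,r}$-partitions and $E_{d,r}$-partitions again uses exactly the strict-inequality hypothesis, so whichever route is taken, that hypothesis is the combinatorial heart of the argument.
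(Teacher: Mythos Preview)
The paper does not prove Schur's theorem; it is quoted as a classical result with a citation to Schur's original paper \cite{Sch26} and pointers to \cite{AG93,And68,And94,Pa,Gle28} for proofs and history. So there is no ``paper's own proof'' to compare against here.

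Your outline follows the standard $q$-difference equation approach (essentially the method of Andrews in the cited reference \cite{And68}), and you also mention the Alladi--Gordon recoding bijection as an alternative. Both are legitimate routes to the result, and indeed the present paper uses exactly the $q$-difference strategy to prove its overpartition analogue in Section~\ref{S:Gen}.

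One caution on a technical point: the step ``massage this into a first-order recurrence $f(x) = P(x,q)\, f(xq^d)$ and iterate'' is overoptimistic. Conditioning on the smallest part yields a genuinely second-order relation in $x$, coupling $f(x)$ to both $f(xq^d)$ and $f(xq^{2d})$, and this does not collapse to a first-order equation that one can simply iterate into a product. The standard resolution---which is precisely what the paper carries out for its own recurrence in Proposition~\ref{P:Fqdiff}---is to write $f(x)$ as a product times an unknown series $G(x)=\sum_n A_n x^n$, extract a two-term recurrence for the coefficients $A_n$, solve it in closed form, and then identify the resulting hypergeometric sum via a known summation or transformation. So your high-level plan is sound, but the mechanism for producing the infinite product is coefficient recursion rather than iteration of a first-order functional equation.
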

\noindent
For more on the history of this theorem, its proofs and its ramifications, see \cite{AG93,And68,And94,Pa,Gle28}.

Denote the generating function for $B_{d,r}(n)$ by
\begin{equation*}
\sB_{d,r}(q) := \sum_{n \geq 0}B_{d,r}(n)q^n.
\end{equation*}
Schur's theorem implies that $\sB_{d,r}(q)$ is a modular function, since
\begin{equation}
\label{Eprod}
\sum_{n \geq 0} E_{d,r}(n)q^n = \left(-q^r,-q^{d-r};q^d\right)_{\infty}.
\end{equation}
Here we have used the usual $q$-series notation,
\begin{equation*}
\left(a_1,a_2,\cdots,a_k;q\right)_n := \prod_{j=0}^{n-1}\left(1-a_1q^j\right)\left(1-a_2q^j\right)\cdots\left(1-a_kq^j\right),
\end{equation*}
valid for $n \in \mathbb{N}_{0} \cup \{\infty\}$.

Now let $C_{d,r}(n)$ denote the number of partitions enumerated by $B_{d,r}(n)$ that also satisfy the additional restriction that the smallest part is larger than $d$.  Denote the generating function for $C_{d,r}(n)$ by
\begin{equation*}
\sC_{d,r}(q) := \sum_{n \geq 0} C_{d,r}(n)q^n.
\end{equation*}
Motivated by an observation of Andrews \cite{And68}, the first and third authors recently showed that $C_{d,r}(q)$ is not a modular form, but the following product of the modular form $\sB_{d,r}(q)$ and a certain specialization of the ``universal" mock theta function,
\begin{equation*}
g_3(x;q) := \sum_{n \geq 0} \frac{q^{n(n+1)}}{(x,q/x;q)_{n+1}}.
\end{equation*}
\begin{theorem*}[see Theorem 1.2 of \cite{Br-Ma}]
We have
\begin{equation}
\label{E:C=g3}
\sC_{d,r}(q) = \sB_{d,r}(q) g_3\left(-q^r; q^d\right).
\end{equation}
\end{theorem*}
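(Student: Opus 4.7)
The plan is to apply Schur's theorem to substitute $\sB_{d,r}(q) = (-q^r, -q^{d-r};q^d)_\infty$ and to use the elementary factorization
\begin{equation*}
(-q^r,-q^{d-r};q^d)_\infty = (-q^r,-q^{d-r};q^d)_{n+1}\,(-q^{r+d(n+1)},-q^{d-r+d(n+1)};q^d)_\infty
\end{equation*}
to reduce the stated identity to the purely combinatorial $q$-series statement
\begin{equation*}
\sC_{d,r}(q) = \sum_{n \geq 0} q^{dn(n+1)} \bigl(-q^{r+d(n+1)}, -q^{d-r+d(n+1)}; q^d\bigr)_\infty. \tag{$\ast$}
\end{equation*}

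To produce a workable generating function for the left-hand side, I would invoke the standard change of variables $\mu_i := \lambda_i - (i-1)d$, which bijects Schur partitions $\lambda$ having $k$ parts onto weakly increasing partitions $\mu_1 \leq \cdots \leq \mu_k$ whose parts lie in the residue classes $0, r, d-r$ modulo $d$ with strict increase enforced whenever $\mu_i \equiv 0 \pmod{d}$; moreover $|\lambda| = |\mu| + d\binom{k}{2}$, and $\lambda \in C_{d,r}$ translates to $\mu_1 > d$. Separating each such $\mu$ into its distinct residue-$0$ parts (from $\{2d, 3d, 4d, \ldots\}$) and its unrestricted $\pm r$-residue multiset (from $\{d+r, 2d-r, 2d+r, 3d-r, \ldots\}$), the length-generating function factors, giving
\begin{equation*}
\sC_{d,r}(q) = \sum_{k \geq 0} q^{d\binom{k}{2}} \,[z^k]\, \frac{(-zq^{2d};q^d)_\infty}{(zq^{d+r}, zq^{2d-r}; q^d)_\infty}.
\end{equation*}

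The heart of the proof is to match this expression with the sum on the right-hand side of $(\ast)$. I would expand each of the three $q$-products via the $q$-exponentials $(-z;q)_\infty = \sum_m z^m q^{\binom{m}{2}}/(q;q)_m$ and $1/(z;q)_\infty = \sum_m z^m/(q;q)_m$ to obtain a triple sum in indices $(m, i, j)$ for the $z^k$-coefficient, then reindex by introducing $n$ as a suitable linear combination of $m, i, j$ and sum over one of the three indices. The goal is to see the inner infinite product $(-q^{r+d(n+1)}, -q^{d-r+d(n+1)}; q^d)_\infty$ reassemble through a $q$-binomial evaluation after this telescoping.

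The main obstacle is precisely this matching step: the summation index $n$ on the right-hand side of $(\ast)$ does not correspond to any transparent partition statistic of $\lambda$---it is not the number of multiples of $d$ in $\lambda$, nor the length $k$, nor the largest $n$ with $\{2d, 4d, \ldots, 2nd\} \subseteq \lambda$---so a direct bijective argument is elusive. I expect the cleanest route to be either a specialization of Heine's transformation applied to the $_2\phi_1$ arising from the coefficient extraction, or a Bailey-pair argument of the type known to produce $g_3$-series. A viable alternative is the overpartition reformulation foreshadowed in the abstract, which may allow one to construct a bijection from partitions counted by $C_{d,r}$ onto pairs (an $n$-indexed ``core'' together with a tail generated by $(-q^{r+d(n+1)}, -q^{d-r+d(n+1)}; q^d)_\infty$), with the integer $n$ measuring the length of a maximal pair-chain $(d-r, d+r), (2d-r, 2d+r), \ldots, (nd-r, nd+r)$ at the bottom of the associated distinct $\pm r$-partition under Schur's classical bijection.
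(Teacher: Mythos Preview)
This theorem is not proved in the present paper; it is quoted from \cite{Br-Ma} as background. The method used there---and the method the present paper uses for its $g_2$-analogue in Section~\ref{S:Gen}---is quite different from yours: one introduces the two-variable refinement $f_{d,r}(x;q)=\sum_{m,n}B_{d,r}(m,n)x^mq^n$ tracking the number of parts, derives a $q$-difference equation in $x$ by conditioning on the smallest part, renormalizes so that the recursion on the power-series coefficients in $x$ becomes first order, solves it as an explicit hypergeometric sum, and then specializes to $x=1$ and $x=q^d$. No coefficient extraction or triple product expansion enters.

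Your reduction to $(\ast)$ is correct, and the staircase shift $\mu_i=\lambda_i-(i-1)d$ followed by separation into residue classes does yield the coefficient-extraction formula you write. But the proposal stops exactly where the content lies. You yourself flag the matching of
\[
\sum_{k\ge0}q^{d\binom{k}{2}}\,[z^k]\,\frac{(-zq^{2d};q^d)_\infty}{(zq^{d+r},\,zq^{2d-r};q^d)_\infty}
\qquad\text{with}\qquad
\sum_{n\ge0}q^{dn(n+1)}\bigl(-q^{r+d(n+1)},-q^{d-r+d(n+1)};q^d\bigr)_\infty
\]
as ``the main obstacle'' and then list three possible attacks (Heine, Bailey pairs, a conjectural bijection) without carrying any of them out. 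The triple $q$-exponential expansion does not collapse via a single $q$-binomial evaluation; a genuine ${}_3\phi_2$-type transformation (or the $q$-difference argument above) is needed. The bijective speculation at the end is also not right as stated: interpreting the $n$-th summand combinatorially, it counts distinct $\pm r$-partitions containing the full block $\{jd\pm r:1\le j\le n\}$ and \emph{omitting} $(n{+}1)d-r$, so partitions whose pair-chain first breaks at a missing $(n{+}1)d+r$ (with $(n{+}1)d-r$ present) are not picked up by any term, and your ``maximal pair-chain length'' statistic does not index the sum. As written this is a correct reformulation of the theorem, not a proof.
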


The universal mock theta function $g_3(x;q)$ is so-named because Hickerson \cite{Hic88-1,Hic88-2} and Gordon and McIntosh \cite{GM12} have shown that each of the classical \emph{odd} order mock theta functions may be expressed, up to the addition of a modular form, as a specialization of $g_3(x;q)$.   There is a second universal mock theta function,
\begin{equation}
\label{E:g2univ}
g_2(x;q) := \sum_{n \geq 0} \frac{(-q; q)_n q^{\frac{n(n+1)}{2}}}
{(x,  q/x; q)_{n+1}},
\end{equation}
which corresponds to the classical \emph{even} order mock theta functions \cite{GM12}.   It was a search for an analogue of \eqref{E:C=g3} with $g_2(x;q)$ in place of $g_3(x;q)$ that led to what follows.

\subsection{Statement of Results}
An {\it overpartition} $\lambda$ of $n$ is a partition of $n$ in which the final occurrence if an integer may be overlined.
Define the $4 \times 4$ matrix $\o{A}_{d,r}$ by
\begin{equation}
\label{Aprimedef}
\o{A}_{d,r} = \bordermatrix{\text{} & \o{r} & \o{d-r} & \o{d} & d \cr \phantom{-}\o{r} & d & 2r & d+r & r \cr \o{d-r} & 2d-2r & d & 2d -r & d-r \cr \phantom{-}\o{d} & 2d-r & d+r & 2d & d \cr \phantom{-}d & d-r & r & d & 0}.
\end{equation}
The rows and columns are indexed by $\overline{r}$, $\overline{d-r}$, $\o{d}$, and $d$, so that, for example, $\o{A}_{d,r}\left(\o{d},\o{d-r}\right) = d+r$.
We consider overpartitions into parts congruent to $r$, $d-r$, or $d \pmod{d}$, where only multiples of $d$ may appear non-overlined.  For $n \geq 0$, let $\o{B}_{d,r}(n)$ denote the number of such overpartitions $\lambda$ of $n$ where
\begin{enumerate}
\item
The smallest part is $\o{r}$, $\o{d-r}$, $\o{d}$, or $2d$ modulo $2d$;
\item
For $u,v \in \{\o{r},\o{d-r},\o{d},d\}$, if $\lambda_{i+1} \equiv u \pmod{d}$ and $\lambda_i \equiv v \pmod{d}$, then $\lambda_{i+1} - \lambda_i \geq \o{A}_{d,r}(u,v)$;
\item
For $u,v \in \{\o{r},\o{d-r},\o{d},d\}$, if $\lambda_{i+1} \equiv u \pmod{d}$ and $\lambda_i \equiv v \pmod{d}$, then $\lambda_{i+1} - \lambda_i \equiv \o{A}_{d,r}(u,v) \pmod{2d}$.  In words, the \emph{actual} difference between two parts must be congruent modulo $2d$ to the \emph{smallest allowable} difference.
\end{enumerate}
Denote the generating function for $\o{B}_{d,r}(n)$ by
$$
\o{\sB}_{d,r}(q) := \sum_{n \geq 0}\o{B}_{d,r}(n)q^n.
$$
Our first result is that $\o{\sB}_{d,r}(q)$ is a quotient of infinite products that is essentially a modular form of weight $-1/2$.
\begin{theorem} \label{main1}
We have
\begin{equation*}
\o{\sB}_{d,r}(q) = \frac{\left(-q^r,-q^{d-r};q^d\right)_{\infty}}{\left(q^{2d};q^{2d}\right)_{\infty}}.
\end{equation*}
\end{theorem}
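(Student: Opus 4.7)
The plan is to prove Theorem \ref{main1} by constructing a weight-preserving bijection between overpartitions $\lambda$ enumerated by $\o{B}_{d,r}(n)$ and pairs $(\pi,\rho)$ of partitions satisfying $|\pi|+|\rho|=n$, where $\pi$ has distinct parts $\equiv\pm r\pmod{d}$ and $\rho$ has parts that are positive multiples of $2d$. Since such pairs are generated by
\[
\bigl(-q^r,-q^{d-r};q^d\bigr)_\infty\cdot\frac{1}{(q^{2d};q^{2d})_\infty},
\]
this identifies $\o{\sB}_{d,r}(q)$ with the desired product.

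The first step is a minimal decomposition. Given $\lambda$ satisfying (i)--(iii) with residue sequence $(u_1,\dots,u_k)\in\{\o{r},\o{d-r},\o{d},d\}^k$, I associate the unique minimal overpartition $\tilde\lambda$ with the same residue sequence, defined by $\tilde\lambda_1\in\{r,d-r,d,2d\}$ (chosen minimally as permitted by (i)) and $\tilde\lambda_{i+1}-\tilde\lambda_i=\o{A}_{d,r}(u_{i+1},u_i)$. A key calculation (verifiable from $\o{A}_{d,r}$) shows that for every $(u,v)$ the admissible gaps are exactly $\{\o{A}_{d,r}(u,v)+2dm:m\ge 0\}$, so condition (iii) forces $\lambda_i=\tilde\lambda_i+2d\,s_i$ for a unique non-decreasing sequence $0\le s_1\le\cdots\le s_k$. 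Via the standard bijection between non-decreasing non-negative sequences of length $k$ and partitions with at most $k$ parts, the shifts encode a partition $\rho_{\mathrm{sh}}$ into positive multiples of $2d$ (with at most $k$ parts), contributing the factor $1/(q^{2d};q^{2d})_k$ to the partial generating function.

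The second step is a weight-preserving bijection between minimal overpartitions $\tilde\lambda$ of weight $m$ and pairs $(\pi,\rho_{\mathrm{min}})$ of the desired form with $|\pi|+|\rho_{\mathrm{min}}|=m$; the final bijection then sends $\lambda\leftrightarrow(\pi,\rho_{\mathrm{min}}\cup\rho_{\mathrm{sh}})$. The construction adapts Bressoud's bijective proof of Schur's theorem: overlined parts in class $\o{r}$ or $\o{d-r}$ pass directly into $\pi$; an overlined $\o{d}$-class part of value $dm$ is split into two distinct parts $a\equiv r$, $b\equiv d-r\pmod{d}$ with $a+b=dm$, the specific split chosen so as to avoid collisions with parts already in $\pi$; and a maximal run of non-overlined $d$-class parts is either merged into an adjacent overlined $\o{r}$- or $\o{d-r}$-class part (increasing its value by a multiple of $d$) or collected in pairs to form $2d$-multiples added to $\rho_{\mathrm{min}}$.

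The main obstacle is establishing that the second-step bijection is well-defined, injective, and surjective. The difficulty is that splits of $\o{d}$-parts and merges of $d$-class runs may compete for the same adjacent $\pm r$-class parts, and the split of an $\o{d}$-part depends on the current state of $\pi$; one must check that the algorithm always produces a valid $E$-partition with distinct parts in the correct residue classes, and that the inverse correctly reconstructs the residue sequence $(u_1,\dots,u_k)$ and hence $\tilde\lambda$. I expect to handle this by induction on the length of $\tilde\lambda$ and a detailed case analysis of the transitions encoded in $\o{A}_{d,r}$, exploiting the strict ordering of parts and the rigidity imposed by condition (iii) to rule out conflicts.
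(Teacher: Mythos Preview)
Your approach is fundamentally different from the paper's. The paper proves Theorem~\ref{main1} analytically: it introduces the two-variable generating function $\o{f}_{d,r}(x;q)=\sum_{m,n\ge 0}\o{B}_{d,r}(m,n)x^mq^n$, derives the $q$-difference equation
\[
\o{f}_{d,r}(x)=\frac{xq^r+xq^{d-r}}{1-xq^d}\,\o{f}_{d,r}\bigl(xq^d\bigr)+\frac{1+xq^d}{1-xq^{2d}}\,\o{f}_{d,r}\bigl(xq^{2d}\bigr)
\]
by conditioning on the smallest part, solves it (after the renormalization $G(x)=\frac{(xq;q)_\infty}{(-x;q)_\infty}F(x)$) as an explicit hypergeometric series, and then sets $x=1$. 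No bijection is constructed anywhere.

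Your first step---the minimal decomposition extracting a shift partition $\rho_{\mathrm{sh}}$ into multiples of $2d$---is correct and cleanly exploits condition~(iii). The second step, however, is a plan rather than a proof: you sketch a Bressoud-style procedure and explicitly say you ``expect'' well-definedness, injectivity, and surjectivity to follow from induction and case analysis. That is precisely the substantive content, and you have not supplied it. In particular, the rules ``split $\o{d}$-parts avoiding collisions'' and ``merge a $d$-run into a neighbour or pair it off into $\rho_{\min}$'' are underspecified: you have not said how the split of an $\o{d}$-part is uniquely determined, nor how to decide between merging and pairing, nor---most seriously---why the map is invertible once $\rho_{\min}$ and $\rho_{\mathrm{sh}}$ have been merged into a single partition $\rho$ with no record of which parts came from which source. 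It is worth noting that the paper's concluding section explicitly lists a bijective proof of Theorem~\ref{main1} as an open problem, so the case analysis you anticipate is not expected to be routine.
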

An immediate corollary is the following combinatorial identity.
\begin{corollary} \label{cor1}
Let $\o{E}_{d,r}(n)$ denote the number of partitions of $n$ into distinct parts congruent to $\pm r$ $\pmod{d}$ and unrestricted parts divisible by $2d$.  Then
for all $n \geq 0$, we have $\o{B}_{d,r}(n) = \o{E}_{d,r}(n)$.
\end{corollary}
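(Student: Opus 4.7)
The plan is simply to read off the generating function identity of Theorem \ref{main1} in combinatorial terms, observing that each of the two factors on the right-hand side has an obvious interpretation as the generating function for a natural class of partitions.

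First I would expand
\begin{equation*}
\left(-q^r,-q^{d-r};q^d\right)_{\infty} = \prod_{j\geq 0}\left(1+q^{r+jd}\right)\left(1+q^{d-r+jd}\right).
\end{equation*}
Since the hypothesis $1\leq r<d/2$ forces $r<d-r$, the two residue classes $r$ and $d-r$ modulo $d$ are distinct, so expanding this product yields the generating function for partitions into \emph{distinct} parts congruent to $\pm r\pmod{d}$. Next, the factor
\begin{equation*}
\frac{1}{(q^{2d};q^{2d})_{\infty}} = \prod_{j\geq 1}\frac{1}{1-q^{2dj}}
\end{equation*}
is, by the standard Euler expansion, the generating function for (unrestricted) partitions into parts divisible by $2d$.

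Therefore, by Theorem \ref{main1},
\begin{equation*}
\o{\sB}_{d,r}(q) = \frac{\left(-q^r,-q^{d-r};q^d\right)_{\infty}}{(q^{2d};q^{2d})_{\infty}} = \sum_{n\geq 0}\o{E}_{d,r}(n)q^n,
\end{equation*}
where the last equality follows by taking the Cauchy product of the two generating functions, which records exactly the choice of a partition into distinct parts $\equiv \pm r\pmod{d}$ together with a partition into parts divisible by $2d$, with total sum $n$. Comparing coefficients of $q^n$ on both sides yields $\o{B}_{d,r}(n) = \o{E}_{d,r}(n)$ for all $n\geq 0$.

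There is no real obstacle here: given the analytic identity of Theorem \ref{main1}, the corollary is a direct translation via elementary generating function combinatorics. The substance of the work lies entirely in establishing Theorem \ref{main1} itself.
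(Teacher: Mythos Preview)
Your proposal is correct and matches the paper's own treatment: the paper states Corollary~\ref{cor1} as an \emph{immediate} consequence of Theorem~\ref{main1}, and your argument is precisely the standard generating-function interpretation that makes this immediacy explicit. There is nothing to add.
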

To illustrate this identity, let $d=3$, $r=1$, and $n=15$.   Then $\overline{E}_{3,1}(15)=14$, the relevant partitions being
$$
\begin{gathered}
(1,2,4,8), (1,2,5,7), (1,2,6,6), (1,2,12), (1,4,10), (1,6,8), (1,14), \\ (2,5,8), (2,6,7), (2,13), (4,5,6), (4,11), (5,10), (7,8).
\end{gathered}
$$
The matrix $\o{A}_{3,1}$ is
$$
\bordermatrix{\text{} & \o{1} & \o{2} & \o{3} & 3 \cr \o{1} & 3 & 2 & 4 & 1 \cr \o{2} & 4 & 3 & 5 & 2 \cr \o{3} & 5 & 4 & 6 & 3 \cr 3 & 2 & 1 & 3 & 0},
$$
and we find that $\overline{B}_{3,1}(15) = 14$ as well, the relevant overpartitions being
$$
\begin{gathered}
(\o{1},3,3,3,\o{5}), (\o{1},3,\o{4},\o{7}), (\o{1},3,\o{5},6), (\o{1},3,\o{11}), (\o{1},\o{5},\o{9}), \\ (\o{2},3,3,3,\o{4}), (\o{2},3,\o{4},6), (\o{2},3,\o{10}), (\o{2},\o{4},\o{9}), (\o{2},\o{5},\o{8}), (\o{3},6,6), (\o{3},12), (6,\o{9}), (\o{15}).
\end{gathered}
$$
\begin{remarks}
\hskip1in \newline \noindent {\it 1.}
Note that appealing to overpartitions in the definition of $\o{B}_{d,r}(n)$ is convenient but not strictly necessary. In particular, in an overpartition counted by $\o{B}_{d,r}(n)$, a given multiple of $d$ may occur overlined or non-overlined, but not both. Moreover, if the overlines are omitted, then the conditions defining $\o{B}_{d,r}(n)$ ensure that there is no ambiguity when reading the partition from smallest part to largest part.

\medskip
\noindent {\it 2.}
Corollary \ref{cor1} is reminiscent of Schur's theorem if we observe that in the definition of $B_{d,r}(n)$, requiring $\lambda_{i+1} - \lambda_i \geq d$ with strict inequality if $d \mid \lambda_{i+1}$ is equivalent to requiring that if $\lambda_{i+1} \equiv u \pmod{d}$ and $\lambda_i \equiv v \pmod{d}$, then $\lambda_{i+1} - \lambda_i \geq A_{d,r}(u,v)$, where
\begin{equation*}
A_{d,r} := \bordermatrix{\text{} & r & d-r & d \cr \phantom{-}r & d & d+2r & d+r \cr d-r & 2d-2r & d & 2d -r \cr \phantom{-}d & 2d-r & d+r & 2d}.
\end{equation*}
Indeed, the $3 \times 3$ matrix in the upper-left of $\o{A}_{d,r}$ is $A_{d,r}$ with the $(r,d-r)$ entry replaced by $2r$.

\end{remarks}
Next we define $\o{C}_{d,r}(n)$ to be the number of overpartitions of $n$ satisfying conditions $(ii)$ and $(iii)$ in the definition of $\o{B}_{d,r}(n)$, with condition $(i)$ modified in the following way:  all parts are larger than $d$ and the smallest part is congruent to $d, \overline{d+r}, \overline{2d-r}$ or $\o{2d}$ modulo $2d$.  (Observe that there is no overlap between the overpartitions counted by $\o{B}_{d,r}(n)$ and $\o{C}_{d,r}(n)$, unlike the case of $B_{d,r}(n)$ and $C_{d,r}(n)$.)   Denote the generating function for $\o{C}_{d,r}(n)$ by
\begin{equation}
\o{\sC}_{d,r}(q) := \sum_{n \geq 0} \overline{C}_{d,r}(n)q^n.
\end{equation}
We show that $\o{\sC}_{d,r}(q)$ is essentially the product of $\o{\sB}_{d,r}(q)$ and a specialization of $g_2(x;q)$, as follows.
\begin{theorem} \label{main2}
We have
\begin{equation}
\label{E:C=B}
\o{\sC}_{d,r}(q) = \o{\sB}_{d,r}(q) \cdot \left(1-q^d\right) g_2\left(-q^r; q^d\right).
\end{equation}
\end{theorem}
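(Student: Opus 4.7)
My plan is to prove Theorem \ref{main2} by reformulating it as an explicit series identity and then establishing that identity through a combinatorial decomposition of $\o{\sC}_{d,r}(q)$.

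Combining Theorem \ref{main1} with the definition \eqref{E:g2univ}, identity \eqref{E:C=B} is equivalent to
\begin{equation*}
\o{\sC}_{d,r}(q) = \frac{(-q^r,-q^{d-r};q^d)_\infty}{(q^{2d};q^{2d})_\infty} \sum_{n \geq 0} \frac{(1-q^d)(-q^d;q^d)_n \, q^{dn(n+1)/2}}{(-q^r,-q^{d-r};q^d)_{n+1}}.
\end{equation*}
I would seek to interpret each summand on the right as the generating function for a subclass of the overpartitions counted by $\o{C}_{d,r}$, indexed by an integer statistic $n \geq 0$. The simplification
\begin{equation*}
\frac{(-q^r,-q^{d-r};q^d)_\infty}{(-q^r,-q^{d-r};q^d)_{n+1}} = (-q^{r+(n+1)d},-q^{d-r+(n+1)d};q^d)_\infty
\end{equation*}
generates distinct overlined parts of residue $\pm r \pmod{d}$ lying above the threshold $(n+1)d$; the factor $q^{dn(n+1)/2}$ is the total weight of a minimal staircase of $n$ forced multiples of $d$; and rewriting
\begin{equation*}
\frac{(1-q^d)(-q^d;q^d)_n}{(q^{2d};q^{2d})_\infty} = \frac{1}{(q^{2d};q^d)_\infty (-q^{(n+1)d};q^d)_\infty}
\end{equation*}
via the identity $(q^{2d};q^{2d})_\infty = (1-q^d)(q^{2d};q^d)_\infty(-q^d;q^d)_\infty$ reveals the contribution of the residual multiples of $d$, both overlined and non-overlined.

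The main combinatorial step is to establish this decomposition explicitly. A natural candidate for the parameter $n$ is the number of overlined parts of $\lambda \in \o{C}_{d,r}$ whose residue modulo $d$ is $r$ or $d-r$; once $n$ is fixed, conditions (ii) and (iii) together with the matrix $\o{A}_{d,r}$ should force a rigid interleaving between these $n$ parts and a staircase of $n$ additional multiples of $d$, while the larger parts contribute freely to the modular-form factors. The main obstacle will be verifying that condition (iii)---which requires the actual difference between consecutive parts to equal the minimum allowable value modulo $2d$---yields precisely the staircase exponent $q^{dn(n+1)/2}$ together with the overline/non-overline choice encoded in $(-q^d;q^d)_n$. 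This will likely require a case analysis based on the four possible residues modulo $2d$ of the smallest part of $\lambda$ (namely $d, \o{d+r}, \o{2d-r}, \o{2d}$) combined with an iterative stripping procedure that peels parts from the bottom of the overpartition, tracking at each step how the gap constraints contribute to the summand corresponding to the extracted value of $n$.
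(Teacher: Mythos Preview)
Your approach is fundamentally different from the paper's, which never attempts a direct combinatorial reading of the individual terms in the $g_2$ sum. Instead the paper introduces the two-variable generating function $\o{f}_{d,r}(x;q)=\sum_{m,n}\o{B}_{d,r}(m,n)x^mq^n$, so that $\o{\sB}_{d,r}(q)=\o{f}_{d,r}(1)$ and $\o{\sC}_{d,r}(q)=\o{f}_{d,r}(q^d)$. A case analysis on the smallest part yields a $q$-difference equation in $x$ (Proposition~\ref{P:frec}); after a normalization this is solved as a closed ${}_3\phi_2$-type series, and a standard ${}_3\phi_2$ transformation produces the $g_2$ form (Proposition~\ref{P:Fqdiff}). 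Theorems~\ref{main1} and~\ref{main2} then drop out simultaneously by specializing $x=1$ and $x=q^d$.

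Your plan, as written, has a genuine gap. First, the proposed statistic $n$ (the number of overlined parts of residue $\pm r$) cannot index the decomposition: the factor $(-q^{(n+1)d+r},-q^{(n+1)d+d-r};q^d)_\infty$ you isolate generates an \emph{arbitrary} number of such parts above the threshold, not exactly $n$ of them. Second, after your simplification the $n$-th summand contains $1/(-q^{(n+1)d};q^d)_\infty=\prod_{k>n}(1+q^{kd})^{-1}$, whose power-series expansion has alternating signs; it cannot be the generating function of any set of overpartitions, so the ``residual multiples of $d$'' reading fails. Third, the claim that ``the larger parts contribute freely to the modular-form factors'' conflicts with the definition of $\o{C}_{d,r}$: conditions $(ii)$ and $(iii)$ constrain \emph{every} consecutive pair of parts, so no tail of $\lambda$ is free and no infinite product over unconstrained parts can appear without first passing through a Schur-type bijection that converts the gap conditions into distinctness/congruence conditions. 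Producing such a bijection is essentially equivalent to giving a bijective proof of Theorem~\ref{main1}, which the paper explicitly leaves as an open problem in Section~\ref{S:Conc}; your outline does not supply it.
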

This means that $(1-q^d) g_2\left(-q^r; q^d\right)$ essentially plays the role of a combinatorial correction factor that describes the difference between the enumeration functions $\o{B}_{d,r}$ and $\o{C}_{d,r}$.

Our final result describes a relationship between $\o{C}_{d,r}(n)$ and events in certain probability spaces with infinite sequences of independent events.  In particular, we find an interpretation in terms of conditional probabilities for the universal mock theta function $g_2(x;q)$ evaluated at real arguments; the precise definitions for the following result are found in Section \ref{S:Prob}.
\begin{theorem}
\label{T:g2Prob}
Suppose that $0 < q < 1$ is real.  There are events $Y$ and $Z$ in a certain probability space such that
\begin{equation*}
\Prob(Y \mid Z) = g_2\left(-q^r; q^d\right).
\end{equation*}
\end{theorem}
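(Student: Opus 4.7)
The plan is to realize $g_2(-q^r;q^d)$ as a ratio of probabilities in a probability space constructed from countably many independent random variables. Combining Theorems \ref{main1} and \ref{main2} yields
\[
g_2(-q^r;q^d) \;=\; \frac{\o{\sC}_{d,r}(q)}{(1-q^d)\,\o{\sB}_{d,r}(q)},
\]
which we aim to exhibit as $\Prob(Y\mid Z)=\Prob(Y\cap Z)/\Prob(Z)$.

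The probability space is modeled on the product form of $\o{\sB}_{d,r}(q)$ from Theorem \ref{main1}. For each positive integer $k\equiv\pm r\pmod d$, introduce an independent Bernoulli indicator $\epsilon_k$ with $\Prob(\epsilon_k=1)=q^k/(1+q^k)$, interpreted as ``the overlined part $\o{k}$ is present.'' For each positive multiple $k$ of $2d$, introduce an independent non-negative integer-valued variable $\eta_k$ with $\Prob(\eta_k=j)=(1-q^k)\,q^{jk}$, encoding the multiplicity of the non-overlined part $k$. Together with the bijection of Corollary \ref{cor1}, these generate a random overpartition $\lambda$ of the $\o{B}_{d,r}$-type satisfying $\Prob(\lambda)=q^{|\lambda|}/\o{\sB}_{d,r}(q)$. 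Augment the space with one additional independent Bernoulli variable $\xi$ with $\Prob(\xi=0)=1-q^d$.

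Set $Z := \{\xi=0\}$, so that $\Prob(Z)=1-q^d$, and let $Y := Z\cap W$, where $W$ is the event that $\lambda$ lies in the image of a weight-preserving injection
\[
\Phi\colon \o{C}_{d,r}\text{-overpartitions}\,\times\,\{0,1,2,\ldots\}\;\longrightarrow\;\o{B}_{d,r}\text{-overpartitions},\quad |\Phi(\mu,k)|=|\mu|+dk.
\]
By Theorem \ref{main2}, the generating function of the image of $\Phi$ is $\o{\sC}_{d,r}(q)/(1-q^d)=\o{\sB}_{d,r}(q)\,g_2(-q^r;q^d)$, so $\Prob(W)=g_2(-q^r;q^d)$. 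Since $W$ depends only on $\lambda$ and is therefore independent of $\xi$,
\[
\Prob(Y\mid Z) \;=\; \frac{\Prob(W)\,\Prob(Z)}{\Prob(Z)} \;=\; g_2(-q^r;q^d).
\]

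The main obstacle is the explicit construction of the injection $\Phi$. A natural candidate combines a ``shift by $d$'' at the smallest part, mapping each allowed smallest-part residue class of $\o{C}_{d,r}$ modulo $2d$ to the corresponding class of $\o{B}_{d,r}$, with the insertion of $k$ auxiliary parts of size $d$ to absorb the factor $1/(1-q^d)$. The delicate point is verifying compatibility with the mod-$2d$ gap conditions encoded in the matrix $\o{A}_{d,r}$, which govern the transitions between residue classes of consecutive parts; alternatively, if a canonical bijective $\Phi$ proves cumbersome, the existence of $W$ as an abstract event with $\Prob(W)=g_2(-q^r;q^d)$ follows directly from Theorem \ref{main2} by choosing an arbitrary subset of the atoms of $\lambda$ realizing the correct $q$-weighted total.
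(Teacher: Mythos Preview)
Your proposal has a genuine gap. The whole argument hinges on the existence of the weight-preserving injection
\[
\Phi\colon \{\o{C}_{d,r}\text{-overpartitions}\}\times\Z_{\geq 0}\;\hookrightarrow\;\{\o{B}_{d,r}\text{-overpartitions}\},
\]
but you do not construct it. Its existence is equivalent to the coefficient-wise inequality
\[
[q^n]\,\frac{\o{\sC}_{d,r}(q)}{1-q^d}\;\leq\;[q^n]\,\o{\sB}_{d,r}(q)\qquad(n\geq 0),
\]
which is strictly stronger than the real-variable bound $g_2(-q^r;q^d)\leq 1$ the theorem is meant to deliver (cf.\ the Remark following Theorem~\ref{T:g2Prob}). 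Your sketched candidate for $\Phi$ (shift the smallest part by $d$ and adjoin $k$ copies of $d$) does not produce $\o{B}_{d,r}$-overpartitions: condition~(i) forbids a non-overlined $d$ as the smallest part, and inserting $d$'s below an existing smallest part violates the mod-$2d$ gap constraints in~$\o{A}_{d,r}$. Your fallback---pick an arbitrary measurable set of atoms with total mass $g_2(-q^r;q^d)$---is circular, since it presupposes $g_2(-q^r;q^d)\in[0,1]$, which is exactly the nontrivial content. Finally, the auxiliary variable $\xi$ is window-dressing: with $Y=Z\cap W$ and $W$ independent of $Z$ you get $\Prob(Y\mid Z)=\Prob(W)$, so the conditioning is vacuous.

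The paper's route avoids any injection or coefficient comparison. It builds independent Bernoulli events $O_k$, $N_{kd}$ and defines events $\calW_k$ (for $k\geq 0$) by gap conditions on consecutive $O$'s and $N$'s directly mirroring the matrix $\o{A}_{d,r}$. The key step is to verify that the sequence $\Prob(\calW_k)$ satisfies the \emph{same} two-term recurrence in $k$ as the normalized generating functions $h_{d,r}(q^{kd})$ coming from Proposition~\ref{P:frec}, with matching boundary value $1$ as $k\to\infty$; uniqueness forces $\Prob(\calW_k)=h_{d,r}(q^{kd})$. Then $g_2(-q^r;q^d)$ appears as the honest conditional probability $\Prob(\o{O}_r\o{O}_{d-r}\o{O}_d\mid W_{d,r})$, with no appeal to an unproved combinatorial inequality.
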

\begin{remark}
Since probabilities are between $0$ and $1$, Theorem \ref{T:g2Prob} immediately implies that for real $0 \leq q < 1$ we have the bound
\begin{equation*}
g_2\left(-q^r; q^d\right) < 1.
\end{equation*}
\end{remark}

The remainder of the paper is structured as follows.  In the next section we prove Theorems \ref{main1} and \ref{main2} using combinatorial and analytic techniques from the theory of hypergeometric $q$-series.  In Section \ref{S:Prob} we prove Theorem \ref{T:g2Prob} by describing certain probability spaces with infinite sequences of independent events. We conclude in Section \ref{S:Conc} with a brief discussion of open questions arising from this work.

\section{Generating functions, $q$-difference equations, and identities}
\label{S:Gen}

In this section we prove Theorems \ref{main1} and \ref{main2} by deriving and solving $q$-difference equations satisfied by the generating functions for the relevant overpartitions.

Let $\o{B}_{d,r}(m,n)$ (resp. $\o{C}_{d,r}(m,n)$) denote the number of overpartitions counted by $\o{B}_{d,r}(n)$ (resp. $\o{C}_{d,r}(n)$) having $m$ parts.   Define
\begin{equation*}
\o{f}_{d,r}(x) = \o{f}_{d,r}(x;q) := \sum_{m,n\geq 0} \o{B}_{d,r}(m,n)x^m q^n,
\end{equation*}
and note that we have
\begin{equation*}
\o{f}_{d,r}\left(x q^d\right) = \sum_{m,n\geq 0} \o{C}_{d,r}(m,n)x^m q^n.
\end{equation*}
Our goal is to find hypergeometric $q$-series for the cases
\begin{align}
\label{E:BC=f}
\o{\sB}_{d,r}(q)= \o{f}_{d,r}(1;q) = \sum_{n\geq 0} \o{B}_{d,r}(n)q^n, \\
\o{\sC}_{d,r}(q)= \o{f}_{d,r}\left(q^d;q\right) = \sum_{n\geq 0} \o{C}_{d,r}(n)q^n. \notag
\end{align}
We begin by deriving the following $q$-difference equation.
\begin{proposition}
\label{P:frec}
We have
\begin{align*}
\o{f}_{d,r}(x) = \frac{\left(xq^r+ xq^{d-r}\right)}{\left(1-xq^d\right)}\o{f}_{d,r}\left(xq^d\right) + \frac{\left(1+xq^d\right)}{\left(1-xq^{2d}\right)}\o{f}_{d,r}\left(xq^{2d}\right).
\end{align*}
\end{proposition}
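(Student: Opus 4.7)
The plan is to establish the $q$-difference equation by a combinatorial case analysis on the smallest part of the overpartitions enumerated by $\overline{f}_{d,r}(x)$. The starting observation is that the map $\mu \mapsto \mu + 2d$ (adding $2d$ to every part, with the empty partition fixed) preserves all conditions defining $\overline{B}_{d,r}$, and its image consists of the empty partition together with those valid $\lambda$ whose smallest part strictly exceeds the minimum representative of its allowed residue class modulo $2d$. This accounts bijectively for the summand $\overline{f}_{d,r}(xq^{2d})$.

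The remaining overpartitions are exactly those with smallest part equal to $\overline{r}$, $\overline{d-r}$, $\overline{d}$, or $2d$ (the minima of the four allowed classes). For $\lambda_1 \in \{\overline{r}, \overline{d-r}\}$, I claim the bijection
\[
\lambda \;\longleftrightarrow\; \bigl(\lambda_1,\; \underbrace{d, d, \ldots, d}_{k \text{ non-overlined copies}},\; \mu + d\bigr),
\]
where $k \geq 0$ is the maximal initial chain of non-overlined $d$'s after $\lambda_1$ and $\mu$ is an arbitrary valid overpartition (possibly empty). The required gap and mod-$2d$ conditions reduce to inspecting column $d$ of $\overline{A}_{d,r}$, whose entries $r, d-r, d, 0$ match the four possible residues of $\mu_1$ exactly at their minimal allowed values. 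Summing over $k$ and $\mu$ gives $\frac{xq^r + xq^{d-r}}{1-xq^d}\,\overline{f}_{d,r}(xq^d)$. The analogous decomposition with chain element $2d$ and shift by $2d$ handles $\lambda_1 \in \{\overline{d}, 2d\}$, using $\overline{A}_{d,r}(d, \overline{d}) = d$ and $\overline{A}_{d,r}(d, d) = 0$, and contributes $\frac{xq^d + xq^{2d}}{1-xq^{2d}}\,\overline{f}_{d,r}(xq^{2d})$.

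Assembling all contributions yields
\[
\overline{f}_{d,r}(x) \;=\; \overline{f}_{d,r}(xq^{2d}) + \frac{xq^r + xq^{d-r}}{1 - xq^d}\,\overline{f}_{d,r}(xq^d) + \frac{xq^d + xq^{2d}}{1 - xq^{2d}}\,\overline{f}_{d,r}(xq^{2d}),
\]
and the elementary identity $1 + \frac{xq^d + xq^{2d}}{1-xq^{2d}} = \frac{1 + xq^d}{1-xq^{2d}}$ recovers the stated recursion. The main technical obstacle is verifying that the chain-shift decomposition is a genuine bijection: for each allowed residue of $\mu_1$ one must check that $\mu_1 + d$ (resp.\ $\mu_1 + 2d$) meets the prescribed gap and mod-$2d$ conditions immediately after the chain, and simultaneously that $\mu_1 + d$ is never itself a non-overlined $d$ (resp.\ $2d$), which would extend the chain and break uniqueness of the decomposition. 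A direct ``remove the smallest and shift'' would fail precisely when the next part is $d$ or $2d$ non-overlined, yielding a vanishing part; the interposed chain is the device that absorbs these troublesome configurations.
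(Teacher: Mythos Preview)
Your argument is correct and follows essentially the same approach as the paper: a case analysis on the smallest part, with the ``remove the smallest part, strip a chain of repeated $d$'s (or $2d$'s), and shift down'' bijection, verified against the entries of $\o{A}_{d,r}$. The only organizational difference is that you peel off the contribution $\o{f}_{d,r}(xq^{2d})$ first via the global $+2d$ shift and then treat $\lambda_1=2d$ separately, whereas the paper bundles the empty overpartition, $\lambda_1=2d$, and all larger smallest parts into a single ``$\lambda_1>d$'' case yielding $\frac{1}{1-xq^{2d}}\o{f}_{d,r}(xq^{2d})$ directly; the two bookkeeping schemes are equivalent.
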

\begin{proof}
Suppose that $\lambda$ is an overpartition counted by $\o{B}_{d,r}(m,n)$ for some $m$ and $n$.  Then by condition $(i)$ in the definition of $\o{B}_{d,r}(n)$, the smallest part $\lambda_1$ is either $\overline{r}, \overline{d-r}, \overline{d}$, or something larger.  We look at the four cases separately.

In the first case, we may remove the part of size $\overline{r}$ and any possible occurrences of $d$.  All parts are now larger than $d$ and so we may subtract $d$ from each part to obtain a new overpartition $\mu$.  We claim that $\mu$ is an overpartition counted by $\o{B}_{d,r}(m-t-1,n-r-(m-1)d)$, where $t$ is the number of occurrences of $d$ in $\lambda$.  To see this, first note that in passing from $\lambda$ to $\mu$ we have not affected conditions $(ii)$ or $(iii)$ in the definition of $\o{B}_{d,r}(n)$.  Indeed, subtracting $d$ from each part does not alter the residue class of a given part modulo $d$ or the difference between two parts modulo $2d$.   Hence we only need to verify that $\mu$ satisfies condition $(i)$.  For this, suppose first that there are no occurrences of $d$ in $\lambda$.  Then adding $r$ to the $\overline{r}$-column of \eqref{Aprimedef} we see that $\lambda_2 \equiv \o{d+r}, \o{2d-r}, \o{2d}$, or $d$  modulo $2d$, and so
\begin{equation*}
\mu_1 = \lambda_2 - d \equiv \o{r},\o{d-r},\o{d}, \text{ or } 2d \pmod{2d},
\end{equation*}
as required.  The argument is similar if $d$ does occur in $\lambda$, as then $\mu_1 = \lambda_j - d$, where $\lambda_j$ is the first part in $\lambda$ that is larger than $d$.  Thus, the overpartitions counted by $\o{B}_{d,r}(m,n)$ with $\lambda_1 = \o{r}$ are generated by
\begin{equation*}
\frac{xq^{r}}{1-xq^d}\o{f}_{d,r}\left(xq^d\right).
\end{equation*}

Reasoning along the same lines we find that the overpartitions counted by $\o{B}_{d,r}(m,n)$ with $\lambda_1 = \o{d-r}$ are generated by
\begin{equation*}
\frac{xq^{d-r}}{1-xq^d}\o{f}_{d,r}\left(xq^d\right),
\end{equation*}
the overpartitions counted by $\o{B}_{d,r}(m,n)$ with $\lambda_1=\o{d}$ are generated by
\begin{equation}
\label{E:l1=od}
\frac{xq^{d}}{1-xq^{2d}} \o{f}_{d,r}\left(xq^{2d}\right),
\end{equation}
and the overpartitions counted by $\o{B}_{d,r}(m,n)$ with $\lambda_1 > d $ are generated by
\begin{equation}
\label{E:l1>d}
\frac{1}{1-xq^{2d}} \o{f}_{d,r}\left(xq^{2d}\right).
\end{equation}
For \eqref{E:l1=od} and \eqref{E:l1>d}, note that there is a possibility of nonoverlined parts of size $2d$, but that all subsequent parts are larger than $2d$.
Putting the four cases together gives the statement of the proposition.

\end{proof}

In order to find a hypergeometric solution to the recurrence in Proposition \ref{P:frec}, we introduce an auxiliary function with an additional parameter.

\begin{proposition}
\label{P:Fqdiff}
Suppose that $F(x,y;q)$ satisfies the $q$-difference equation
\begin{equation*}
F(x,y;q) = \frac{\left(xy + xy^{-1} q\right)}{1-xq} F(xq, y; q) + \frac{1 + xq}{1-xq^2} F\left(xq^2, y; q\right)
\end{equation*}
for all complex parameters with $|x|, |q| < 1$, and $F(x,y; q) \to 1$ as $x \to 0$.  Then
\begin{equation*}
F(x,y;q) =
\frac{(-x;q)_{\infty}}{(xq;q)_{\infty}}\sum_{n \geq 0} \frac{(y,y^{-1}q; q)_n(-x)^n}{(q^2;q^2)_n}
\frac{(-xy, -xy^{-1}q; q)_\infty}{(xq, -q; q)_\infty}
\sum_{n \geq 0} \frac{(-x,x; q)_n q^{\frac{n(n+1)}{2}}}{(q,-xy, -xy^{-1}q; q)_n}.
\end{equation*}
\end{proposition}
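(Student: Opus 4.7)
The plan is to show $F(x,y;q)$ is uniquely determined by the $q$-difference equation together with the boundary condition $F \to 1$ as $x \to 0$, and then to verify that each of the two closed forms on the right-hand side satisfies these constraints. For uniqueness, I would expand $F(x,y;q) = \sum_{n\geq 0} a_n(y;q)\, x^n$ as a formal power series in $x$; after clearing the denominators $(1-xq)(1-xq^2)$ in the recurrence and comparing coefficients of $x^n$, one obtains a recursion that determines each $a_n$ from $a_0 = 1$, so any two series solutions with the stipulated boundary behavior must agree.

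To verify the first closed form, substitute $F_1(x,y;q) := \frac{(-x;q)_\infty}{(xq;q)_\infty}\, G_1(x)$ into the $q$-difference equation. Using the shift relations $(-xq;q)_\infty = (-x;q)_\infty/(1+x)$ and $(xq^2;q)_\infty = (xq;q)_\infty/(1-xq)$, the prefactor cancels and the equation reduces to
\begin{equation*}
(1+x)\, G_1(x) = x(y + y^{-1}q)\, G_1(xq) + (1-xq)\, G_1(xq^2).
\end{equation*}
Writing $G_1(x) = \sum_n c_n x^n$ and comparing the $x^n$ coefficients gives the two-term recurrence
\begin{equation*}
c_n(1 - q^{2n}) = -c_{n-1}(1 - yq^{n-1})(1 - y^{-1}q^n),
\end{equation*}
where the factored form on the right uses $(1 - yq^{n-1})(1 - y^{-1}q^n) = 1 - yq^{n-1} - y^{-1}q^n + q^{2n-1}$. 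Together with $c_0 = 1$ this forces $c_n = (y, y^{-1}q; q)_n (-1)^n / (q^2; q^2)_n$, exactly the first series.

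The second closed form would be treated analogously via $F_2(x,y;q) := \frac{(-xy, -xy^{-1}q;q)_\infty}{(xq, -q;q)_\infty}\, G_2(x)$, with the boundary condition $F_2 \to 1$ as $x \to 0$ following from Euler's identity $\sum_{n\geq 0} q^{n(n+1)/2}/(q;q)_n = (-q;q)_\infty$ applied to the specialization of $G_2$ at $x = 0$. The main obstacle is that this second prefactor involves four infinite products depending on both $x$ and $y$, so after extraction the resulting $q$-difference equation for $G_2$ does not collapse to as clean a two-term recurrence as in the first case, and one has to work with a slightly more elaborate coefficient comparison. An attractive alternative would be to prove the first representation by the method above and then derive the second from it via a Watson- or Bailey-type $q$-hypergeometric transformation, bypassing the direct verification entirely. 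In either approach, once both $F_1$ and $F_2$ are shown to satisfy the $q$-difference equation with the correct boundary value, the uniqueness from the first step yields $F_1 = F_2 = F(x,y;q)$.
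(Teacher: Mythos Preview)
Your proposal is correct and follows essentially the same route as the paper. The paper likewise renormalizes by $G(x)=\frac{(xq;q)_\infty}{(-x;q)_\infty}F(x,y;q)$, obtains the identical equation $(1+x)G(x)=(xy+xy^{-1}q)G(xq)+(1-xq)G(xq^2)$ and the same two-term coefficient recursion, and for the second representation chooses precisely your ``attractive alternative'': rather than a direct verification, it applies a ${}_3\phi_2$ transformation (equation (III.10) of Gasper--Rahman) with $a=-x$, $b=x$, $c\to\infty$, $d=y$, $e=y^{-1}q$ to the first series.
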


Recalling Proposition \ref{P:frec} and plugging in $q \mapsto q^d, y = q^r$ and $x = 1$ or $x = q^d$ to Proposition \ref{P:Fqdiff}, we immediately obtain the following formulas, which are Theorems \ref{main1} and \ref{main2} (also recall \eqref{E:g2univ}).
\begin{corollary}
\label{C:f1q}
We have
\begin{align*}
\o{\sB}_{d,r}(q) &= \frac{\left(-q^r, -q^{d-r}; q^d\right)_\infty}{\left(q^{2d}; q^{2d}\right)_\infty}, \\
\o{\sC}_{d,r}(q) &= \left(1-q^d\right) \frac{\left(-q^r, -q^{d-r}; q^d\right)_\infty}{\left(q^{2d}; q^{2d}\right)_\infty}
\sum_{n \geq 0} \frac{\left(-q^d; q^d\right)_n q^{\frac{dn(n+1)}{2}}}{\left(-q^r, -q^{d-r}; q^d\right)_{n+1}} \\
&= \left(1-q^d\right) \frac{\left(-q^r, -q^{d-r}; q^d\right)_\infty}{\left(q^{2d}; q^{2d}\right)_\infty} g_2\left(-q^r,q^d\right).
\end{align*}
\end{corollary}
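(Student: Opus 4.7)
The plan is to deduce the corollary from Proposition \ref{P:Fqdiff} by matching $q$-difference equations and then carefully specializing parameters. First I would verify that the $q$-difference equation from Proposition \ref{P:frec} is the same as the one in Proposition \ref{P:Fqdiff} after the substitutions $q \mapsto q^d$, $y \mapsto q^r$, and that both $\o{f}_{d,r}(x;q)$ and $F(x,q^r;q^d)$ satisfy the boundary condition of tending to $1$ as $x \to 0$ (for $\o{f}_{d,r}$ this is the empty partition contributing $1$). Assuming uniqueness of the solution under these conditions, this gives the identification
\begin{equation*}
\o{f}_{d,r}(x;q) = F\!\left(x, q^r; q^d\right),
\end{equation*}
so by \eqref{E:BC=f} everything reduces to evaluating the right-hand side of Proposition \ref{P:Fqdiff} at $x=1$ and $x=q^d$ (with $y=q^r$, $q\mapsto q^d$).

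For $\o{\sB}_{d,r}(q) = F(1,q^r;q^d)$, I would use the second (product $\times$ sum) representation in Proposition \ref{P:Fqdiff}. The key observation is that the factor $(x;q)_n$ inside the sum kills every term with $n\geq 1$ when $x=1$, collapsing the series to its $n=0$ term, which equals $1$. What remains is the prefactor
\begin{equation*}
\frac{\left(-q^{r}, -q^{d-r}; q^d\right)_\infty}{\left(q^{2d}, -q^d; q^d\right)_\infty},
\end{equation*}
and a short computation using $(-q^d;q^d)_\infty = (q^{2d};q^{2d})_\infty/(q^d;q^d)_\infty$ together with $(q^{2d};q^d)_\infty = (q^d;q^d)_\infty/(1-q^d)$ turns the denominator into $(q^{2d};q^{2d})_\infty$, yielding the claimed product.

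For $\o{\sC}_{d,r}(q) = F(q^d,q^r;q^d)$, I would again use the second representation. At $x = q^d$, $q \mapsto q^d$, the Pochhammer $(x;q)_n = (q^d;q^d)_n$ in the numerator of the summand cancels the $(q;q)_n = (q^d;q^d)_n$ in the denominator, leaving
\begin{equation*}
\sum_{n \geq 0} \frac{\left(-q^d; q^d\right)_n q^{\frac{d n(n+1)}{2}}}{\left(-q^{d+r}, -q^{2d-r}; q^d\right)_n}.
\end{equation*}
Using $(-q^{r}; q^d)_{n+1} = (1+q^r)(-q^{d+r}; q^d)_n$ and the analogous identity for the other factor, this sum equals $(1+q^r)(1+q^{d-r})\, g_2(-q^r; q^d)$. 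The prefactor at $x=q^d$, after the same $(q^{2d};q^{2d})_\infty$ simplification as above, carries compensating factors $1/((1+q^r)(1+q^{d-r}))$ together with an extra $(1-q^d)$, and combining everything produces the second formula of the corollary.

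The main obstacle is nothing conceptual; it is just bookkeeping with Pochhammer symbols, particularly the split and index shifts relating $(-q^r;q^d)_{n+1}$ to $(-q^{d+r};q^d)_n$ and the rewriting of $(q^{2d},-q^d;q^d)_\infty$ as $(q^{2d};q^{2d})_\infty/(1-q^d)$. The one genuinely useful trick is recognizing that the factor $(x;q)_n$ collapses the sum at $x=1$ and cancels cleanly at $x=q^d$, which is precisely what makes the specializations $x=1$ and $x=q^d$ land on the enumeration functions we want.
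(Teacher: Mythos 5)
Your proposal is correct and follows exactly the paper's route: the paper likewise identifies $\o{f}_{d,r}(x;q)$ with $F(x,q^r;q^d)$ via the $q$-difference equation of Proposition \ref{P:frec} and uniqueness, and then reads off the corollary by setting $x=1$ and $x=q^d$ in the second representation from Proposition \ref{P:Fqdiff}, with the same collapsing of the sum at $x=1$ and the same Pochhammer shifts at $x=q^d$. One small transcription slip in your $x=1$ computation: the denominator of the prefactor there is $\left(q^d,-q^d;q^d\right)_\infty$, not $\left(q^{2d},-q^d;q^d\right)_\infty$, and it equals $\left(q^{2d};q^{2d}\right)_\infty$ directly from $\left(q^d;q^d\right)_\infty\left(-q^d;q^d\right)_\infty=\left(q^{2d};q^{2d}\right)_\infty$; the identity $\left(q^{2d};q^d\right)_\infty=\left(q^d;q^d\right)_\infty/\left(1-q^d\right)$ belongs only to the $x=q^d$ case, where it correctly produces the factor $\left(1-q^d\right)$ (as written, your $x=1$ simplification would leave a spurious such factor).
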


We may also plug in $x=-1$ or $x=-q^d$ to obtain formulas for $\o{B}_{d,r,+}(n) - \o{B}_{d,r,-}(n)$ and $\o{C}_{d,r,+}(n) - \o{C}_{d,r,-}(n)$, where $\o{B}_{d,r,\pm}(n)$ (resp. $\o{C}_{d,r,\pm}(n))$ is the number of overpartitions of $n$ counted by $\o{B}_{d,r}(n)$ (resp. $\o{C}_{d,r}(n)$) having an even/odd number of parts.
\begin{corollary}
We have
\begin{equation*}
\begin{aligned}
\sum_{n \geq 0} \left(\o{B}_{d,r,+}(n) - \o{B}_{d,r,-}(n)\right)q^n &= \frac{\left(q^r, q^{d-r}; q^d\right)_\infty}{\left(-q^{d}; q^{d}\right)_{\infty}^2}, \\
\sum_{n \geq 0} \left(\o{C}_{d,r,+}(n) - \o{C}_{d,r,-}(n)\right)q^n
&= \left(1 + q^d\right)\frac{\left(q^r, q^{d-r}; q^d\right)_\infty}{\left(-q^{d}; q^{d}\right)_{\infty}^2}
\sum_{n \geq 0} \frac{\left(-q^d; q^d\right)_n q^{\frac{dn(n+1)}{2}}}{\left(q^r, q^{d-r}; q^d\right)_{n+1}}\\
&= \left(1 + q^d\right)\frac{\left(q^r, q^{d-r}; q^d\right)_\infty}{\left(-q^{d}; q^{d}\right)_{\infty}^2}
g_2\left(q^r; q^d\right).
\end{aligned}
\end{equation*}
\end{corollary}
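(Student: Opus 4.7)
The plan rests on the observation that the bivariate generating function $\o{f}_{d,r}(x;q) = \sum_{m,n \geq 0}\o{B}_{d,r}(m,n) x^m q^n$ records the number of parts in the exponent of $x$, so specializing $x \mapsto -1$ converts the part-counts into signs. Concretely, I would first verify that
$\o{f}_{d,r}(-1;q) = \sum_{n \geq 0} \bigl(\o{B}_{d,r,+}(n) - \o{B}_{d,r,-}(n)\bigr) q^n$,
and, since $\o{f}_{d,r}(xq^d;q)$ is the bivariate generating function for $\o{C}_{d,r}(m,n)$, that
$\o{f}_{d,r}(-q^d;q) = \sum_{n \geq 0} \bigl(\o{C}_{d,r,+}(n) - \o{C}_{d,r,-}(n)\bigr) q^n$.

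Since $\o{f}_{d,r}(x;q) = F(x, q^r; q^d)$ under the specialization already used in Corollary \ref{C:f1q}, the two identities follow from evaluating Proposition \ref{P:Fqdiff} at $(x,y;q) = (-1, q^r; q^d)$ and $(-q^d, q^r; q^d)$. I will use the representation of $F(x,y;q)$ with prefactor $\frac{(-xy,\,-xy^{-1}q;\,q)_\infty}{(xq,\,-q;\,q)_\infty}$ and sum $\sum_{n \geq 0} \frac{(-x,\,x;\,q)_n\, q^{n(n+1)/2}}{(q,\,-xy,\,-xy^{-1}q;\,q)_n}$, since this is the form that, under the specializations at hand, delivers the infinite products appearing in the statement of the corollary.

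For $x = -1$ the sum collapses to its $n=0$ term: the factor $(-x,x;q)_n = (1,-1;q)_n$ contains $(1;q)_n$, which vanishes for $n \geq 1$. The prefactor reduces to $\frac{(y,\,y^{-1}q;\,q)_\infty}{(-q;\,q)_\infty^2}$, and the substitutions $q \mapsto q^d$, $y = q^r$ yield the first formula $\frac{(q^r,q^{d-r};q^d)_\infty}{(-q^d;q^d)_\infty^2}$. For $x = -q^d$ neither factor collapses. Applying the Pochhammer shift identities $(q^{d+r};q^d)_\infty = \frac{(q^r;q^d)_\infty}{1-q^r}$, $(q^{2d-r};q^d)_\infty = \frac{(q^{d-r};q^d)_\infty}{1-q^{d-r}}$, and $(-q^{2d};q^d)_\infty = \frac{(-q^d;q^d)_\infty}{1+q^d}$ converts the prefactor into $\frac{(1+q^d)(q^r,q^{d-r};q^d)_\infty}{(1-q^r)(1-q^{d-r})(-q^d;q^d)_\infty^2}$. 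The sum becomes $\sum_{n \geq 0} \frac{(-q^d;q^d)_n\, q^{dn(n+1)/2}}{(q^{d+r},q^{2d-r};q^d)_n}$, and the analogous shifts in the Pochhammer symbols in the denominator pull out a compensating $(1-q^r)(1-q^{d-r})$, leaving $(1-q^r)(1-q^{d-r})\, g_2(q^r; q^d)$. The spurious factors cancel, giving the second formula.

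The calculation is entirely routine once Proposition \ref{P:Fqdiff} is in hand; the only subtlety—hardly an obstacle—is keeping track of the Pochhammer shifts in the $x = -q^d$ case and noticing the $n=0$ collapse of the sum when $x = -1$.
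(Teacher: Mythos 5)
Your proposal is correct and follows exactly the route the paper intends: the paper proves this corollary simply by remarking that one plugs $x=-1$ and $x=-q^d$ (with $q\mapsto q^d$, $y=q^r$) into Proposition \ref{P:Fqdiff}, using the second (product-prefactor) representation of $F$. Your verification of the $n=0$ collapse via $(1;q)_n=0$ for $n\geq 1$ in the $x=-1$ case, and the Pochhammer shifts producing the compensating factors $(1-q^r)(1-q^{d-r})$ and $(1+q^d)$ in the $x=-q^d$ case, supplies precisely the routine details the paper omits.
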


\begin{proof}[Proof of Proposition \ref{P:Fqdiff}]
We first find a hypergeometric solution to the $q$-difference equation (which must be unique as in Lemma 1 of \cite{And68Q}), and then use a $_3 \phi_2$ transformation to obtain the result.  It is convenient to renormalize the $q$-difference equation by defining
\begin{equation}
\label{E:Gdr}
G(x,y;q) = G(x) := \frac{(xq; q)_\infty}{(-x; q)_\infty} F(x,y;q).
\end{equation}
One then easily sees that this function satisfies the equation
\begin{equation}
\label{E:Gqdiff}
(1+x) G(x) = \left(xy + xy^{-1}q\right) G(xq) + (1-xq) G\left(xq^2\right).
\end{equation}
If we expand $G$ as a series in $x$, writing $G(x) =: \sum_{n \geq 0} A_n x^n$, then isolating the $x^n$ coefficient of \eqref{E:Gqdiff} implies (after some simplification) that
\begin{equation*}
A_n = -\frac{\left(1 - yq^{n-1}\right)\left(1- y^{-1}q^n\right)}{1-q^{2n}}A_{n-1}.
\end{equation*}
Since we clearly have $A_0 = 1$, we find a hypergeometric series for $G(x,y;q)$, which combines with \eqref{E:Gdr} to give the solution
\begin{equation*}
F(x,y;q) = \frac{(-x;q)_{\infty}}{(xq;q)_{\infty}}
\sum_{n \geq 0} \frac{(y,y^{-1}q; q)_n(-x)^n}{(q^2;q^2)_n}.
\end{equation*}
Finally, we use the following $_{3}\phi_{2}$ transformation (which is found in an equivalent form as equation (III.10) in \cite{GR90})
\begin{equation*}
\sum_{n\geq 0}\frac{\left(\frac{aq}{bc},d,e;q\right)_{n}}{\left(q,\frac{aq}{b},\frac{aq}{c};q\right)_{n}}\left(\frac{aq}{de}\right)^n= \frac{\left(\frac{aq}{d},\frac{aq}{e},\frac{aq}{bc};q\right)_{\infty}}{\left(\frac{aq}{b},\frac{aq}{c},\frac{aq}{de};q\right)_{\infty}}
\sum_{n\geq 0}\frac{\left(\frac{aq}{de},b,c;q\right)_{n}}{\left(q,\frac{aq}{d},\frac{aq}{e};q\right)_{n}}\left(\frac{aq}{bc}\right)^n.
\end{equation*}
Setting $a=-x$, $b=x$, $c \to \infty$, $d=y$, and $e= y^{-1} q$ gives the result.
\end{proof}

\section{Probabilistic interpretation of universal mock theta function}
\label{S:Prob}

In this section we prove the remarkable fact that Gordon and McIntosh's universal mock theta function at real arguments occurs naturally as the conditional probability of certain events in simple probability spaces.

For $k \geq 1$, define independent events $N_{kd}$ and $O_k$ that occur with probabilities
\begin{equation}
\label{E:NOProb}
\Prob(N_{kd}) = n_{kd} := q^{kd} \qquad \text{and} \qquad \Prob(O_k) = o_k := \frac{q^k}{1+q^k},
\end{equation}
with complementary probabilities $\o{n}_{kd} := 1 - n_{kd}, \o{o}_k := 1 - o_k.$  We further let $T_k$ denote trivial events that each occur with probability $1$. For any events $R$ and $S$, we adopt the space-saving notational conventions $RS := R \cap S.$

We now define additional events based on the sequences of $N_{kd}$s and $O_k$s.  First we introduce further auxiliary events, as for $j \geq 1$ we set
\begin{equation*}
E_j := \begin{cases}
O_{nd+r} \o{O}_{nd+d-r} \o{O}_{(n+1)d}  \cup \o{O}_{nd+r} \qquad &\text{if } j = nd + r, \\
O_{nd+d-r} \o{O}_{(n+1)d} \cup \o{O}_{nd +d-r} &\text{if } j = nd + d-r, \\
O_{(n+1)d} \o{O}_{(n+1)d+r} \o{O}_{(n+1)d+d-r} \o{O}_{(n+2)d} \o{N}_{(n+1)d} \cup \o{O}_{(n+1)d} & \text{if } j = (n+1)d, \\
T_j &\text{if } j \not \equiv 0, \pm r \pmod{d}.
\end{cases}
\end{equation*}
Note that $E_j$ is independent from all $N_{kd}$ and $O_k$s if $j \not \equiv 0, \pm r \pmod{d}.$  Our main focus in this section is then on the events
\begin{equation*}
W_{d,r} := \bigcap_{j \geq 1} E_j, \qquad X_{d,r} := \bigcap_{j \geq d+1} E_j.
\end{equation*}
In words, $W_{d,r}$ is the event such that if $O_{dn+r}$ occurs, then $O_{dn+d-r}$ and $O_{(n+1)d}$ do not occur; if $O_{dn+d-r}$ occurs, then $O_{(n+1)d}$ does not occur; and if $O_{(n+1)d}$ occurs, then $N_{(n+1)d},\ O_{(n+1)d+r}$,\\ $O_{(n+1)d+d-r}$ and $O_{(n+2)d}$ do not occur. The event $X_{d,r}$ has the same conditions beginning from $O_{d+r}$, with no restrictions on $N_d, O_r, O_{d-r}$ or $O_d$.

Note that by using basic set operations, the conditions for the event $W_{d,r}$ can alternatively be written as
\begin{align}
\label{E:Wdisjoint}
\bigcap_{n \geq 0} \Big(O_{nd+r} & \o{O}_{nd+d-r} \o{O}_{(n+1)d} \cup
 \o{O}_{nd+r} O_{nd+d-r} \o{O}_{(n+1)d} \\
 & \cup
\o{O}_{nd+r} \o{O}_{nd+d-r} \o{N}_{(n+1)d} \o{O}_{(n+1)d+r} \o{O}_{(n+1)d+d-r} \o{O}_{(n+2)d}\Big) \notag
\end{align}
In other words, either exactly one of $O_{nd+r}$ or $O_{nd+d-r}$ occurs, or neither of them do, with resulting gap conditions on subsequent events.

\begin{theorem}
\label{T:prob}
Suppose that $0 < q < 1$.  The following identities hold:
\begin{enumerate}
\item
\label{T:prob:W|X}
$\displaystyle \Prob(W_{d,r} \mid X_{d,r})
= \frac{1}{\left(1+q^r\right) \left(1+q^{d-r}\right) \left(1+q^d\right)} \cdot \frac{1}{g_2\left(-q^r; q^d\right)}$,
\item
\label{T:prob:F|W}
$\displaystyle \Prob(\o{O}_r \o{O}_{d-r} \o{O}_d \mid W_{d,r}) = g_2\left(-q^r; q^d\right).$
\end{enumerate}
\end{theorem}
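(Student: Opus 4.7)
The plan is to reduce (i) and (ii) to a single computation of the ratio $\Prob(W_{d,r})/\Prob(X_{d,r})$ by means of an independence observation, and then to determine this ratio by matching a probability recurrence to the $q$-difference equation of Proposition \ref{P:frec}.

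First I would observe that since $E_r = O_r \o{O}_{d-r} \o{O}_d \cup \o{O}_r$ (and analogously for $E_{d-r}$ and $E_d$), intersecting each event with the respective $\o{O}$-complement collapses it to a single disjunct, giving $E_r \cap \o{O}_r = \o{O}_r$, and so on. Since none of $O_r, O_{d-r}, O_d$ appears in any $E_j$ with $j \geq d+1$, these events are independent of $X_{d,r}$, and we obtain
\begin{equation*}
W_{d,r} \cap \o{O}_r \o{O}_{d-r} \o{O}_d = \o{O}_r \cap \o{O}_{d-r} \cap \o{O}_d \cap X_{d,r},
\end{equation*}
whose probability factors as $\Prob(X_{d,r})/[(1+q^r)(1+q^{d-r})(1+q^d)]$. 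Dividing by $\Prob(W_{d,r})$ reveals that (i) and (ii) are equivalent, so it suffices to prove $\Prob(W_{d,r})/\Prob(X_{d,r}) = 1/[(1+q^r)(1+q^{d-r})(1+q^d) g_2(-q^r; q^d)]$.

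For this, I would introduce the shifted events $W^{(m)}_{d,r} := \bigcap_{j \geq md+1} E_j$, so that $W^{(0)} = W_{d,r}$ and $W^{(1)} = X_{d,r}$. Applying the disjoint decomposition \eqref{E:Wdisjoint} at block $m$ with parts $A_m, B_m, C_m$, one observes that $A_m$ and $B_m$ involve only events at positions in $(md, (m+1)d]$ and are thus independent of $W^{(m+1)}_{d,r}$, while $C_m$ already implies $E_{(m+1)d+r}, E_{(m+1)d+d-r}, E_{(m+2)d}$ through its $\o{O}$ factors; this forces $C_m \cap W^{(m+1)}_{d,r} = C_m \cap W^{(m+2)}_{d,r}$ with $C_m$ independent of $W^{(m+2)}_{d,r}$. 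The resulting two-step recurrence
\begin{equation*}
\Prob(W^{(m)}_{d,r}) = [\Prob(A_m) + \Prob(B_m)]\Prob(W^{(m+1)}_{d,r}) + \Prob(C_m)\Prob(W^{(m+2)}_{d,r}),
\end{equation*}
combined with the scaling
\begin{equation*}
F_m := \frac{(-q^r, -q^{d-r}, -q^d; q^d)_\infty (q^d; q^d)_m}{(q^d; q^d)_\infty (-q^r, -q^{d-r}, -q^d; q^d)_m},
\end{equation*}
matches Proposition \ref{P:frec} evaluated at $x = q^{md}$; hence one expects $\Prob(W^{(m)}_{d,r}) = \o{f}_{d,r}(q^{md}; q)/F_m$.

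The main obstacle is making this last identification rigorous, since a second-order linear recurrence has two independent solutions and the limit $\Prob(W^{(m)}_{d,r}) \to 1$ alone is not sufficient. I would argue by a term-by-term analysis in the $q$-expansion: because $\Prob(A_m) + \Prob(B_m) = O(q^{md})$ and $\Prob(C_m) = 1 + O(q^{md})$, the coefficient of each power $q^n$ in $\Prob(W^{(m)}_{d,r})$ stabilizes to the value $[n=0]$ for all $m > n/d$, and the recurrence then determines it for smaller $m$ by backward substitution, forcing agreement with $\o{f}_{d,r}(q^{md})/F_m$. Finally, evaluating at $m = 0, 1$ yields $\Prob(W_{d,r}) = \o{\sB}_{d,r}(q)/F_0$ and $\Prob(X_{d,r}) = \o{\sC}_{d,r}(q)/F_1$; combining these with the identity $\o{\sC}_{d,r}(q) = (1-q^d)\o{\sB}_{d,r}(q) g_2(-q^r; q^d)$ from Corollary \ref{C:f1q} together with the explicit ratio $F_1/F_0 = (1-q^d)/[(1+q^r)(1+q^{d-r})(1+q^d)]$ produces the desired formula for $\Prob(W_{d,r})/\Prob(X_{d,r})$, establishing (i) and hence (ii).
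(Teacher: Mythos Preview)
Your proposal is correct and follows essentially the same route as the paper: both introduce the shifted events $W^{(m)}_{d,r}=\calW_m$, derive the two-term recurrence from the decomposition \eqref{E:Wdisjoint}, match it to Proposition~\ref{P:frec} via a normalization (your $1/F_m$ is exactly the prefactor in the paper's $h_{d,r}$ at $x=q^{md}$), identify $\Prob(\calW_m)$ with $h_{d,r}(q^{md})$ by the boundary condition at $m\to\infty$, and then read off the ratio using Corollary~\ref{C:f1q}. The only differences are organizational --- you first reduce (ii) to (i) via the factorization $\o O_r\o O_{d-r}\o O_d\cap W_{d,r}=\o O_r\o O_{d-r}\o O_d\cap X_{d,r}$, which is precisely the independence observation the paper uses at the end to deduce (ii) from (i) --- and you are more explicit than the paper about why the limit condition forces uniqueness of the recurrence solution.
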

\begin{proof}
For fixed $(d,r)$, let $\calW_k$ denote the event that all of the conditions in the definition of $W_{d,r}$ are met beginning from $E_{kd+r}$ (or, equivalently, from $E_{kd+1}$), so that
\begin{equation*}
\calW_k = \bigcap_{j > kd} E_j.
\end{equation*}
For example, $\calW_0 = W_{d,r}$, and $\calW_{1} = X_{d,r}.$

Then it is clear from \eqref{E:Wdisjoint} that the probabilities of these events satisfy the recurrence
\begin{align}
\label{E:Wkrec}
\Prob(\calW_{k}) = & \left(o_{kd+r} \o{o}_{kd+d-r} \o{o}_{(k+1)d} + \o{o}_{kd+r} o_{kd+d-r} \o{o}_{(k+1)d} \right) \Prob(\calW_{k+1}) \\
& \qquad + \o{n}_{(k+1)d} \cdot \o{o}_{kd+r} \o{o}_{kd+d-r} \o{o}_{(k+1)d+r} \o{o}_{(k+1)d+d-r} \o{o}_{(k+2)d} \Prob(\calW_{k+2}). \notag
\end{align}

In order to compare these probabilities to overpartitions counted by $\o{B}_{d,r}(n)$, we define yet another renormalization of the generating functions.  Specifically, let
\begin{equation}
\label{E:hdr}
h_{d,r}(x) = h_{d,r}(x;q) := \frac{\left(xq^d; q^d\right)_\infty}{\left(-xq^r, -xq^{d-r}, -xq^d; q^d\right)_\infty} \o{f}_{d,r}(x;q),
\end{equation}
so that by Proposition \ref{P:frec} we have the $q$-difference equation
\begin{align}
\label{E:hrec}
h_{d,r}(x) = & \frac{xq^r + xq^{d-r}}{\left(1+xq^r\right)\left(1+xq^{d-r}\right)\left(1+xq^d\right)} h_{d,r}\left(xq^d\right) \\
& + \left(1-xq^d\right) \cdot \frac{1}{\left(1+xq^r\right)\left(1+xq^{d-r}\right)\left(1+xq^{d+r}\right)\left(1+xq^{2d-r}\right)\left(1+xq^{2d}\right)} h_{d,r}\left(xq^{2d}\right). \notag
\end{align}

If we now define $\calH_k = \calH_k(q) := h_{d,r}\left(q^{kd}\right)$ and recall \eqref{E:NOProb}, then \eqref{E:hrec} implies that the recurrence \eqref{E:Wkrec} holds with $\calH_k$ in place of $\Prob(\calW_k).$
We observe that as $k \to \infty$, we have the limit $\calH_k \to 1$, because $h_{d,r}(x) \to 1$ as $x \to 0$.  Similarly, we also have $\Prob(\calW_k) \to 1$ since there are no conditions on any $N_j$ or $O_j$ in the limit.  This boundary condition guarantees that the recurrence has a unique solution, hence
\begin{equation*}
\Prob(\calW_k) = \calH_k(q) = h_{d,r}\left(q^{kd}\right).
\end{equation*}

We can now complete the proof of the theorem.  For part \ref{T:prob:W|X}, we calculate
\begin{equation}
\label{E:W|X}
\Prob(W_{d,r} \mid X_{d,r}) = \frac{\Prob\left(W_{d,r}\right)}{\Prob\left(X_{d,r}\right)} = \frac{\Prob\left(\calW_0\right)}{\Prob\left(\calW_1\right)}
= \frac{\left(1-q^d\right) \o{f}_{d, r}(1)}
{\left(1+q^r\right)\left(1+q^{d-r}\right)\left(1+q^d\right) \o{f}_{d, r}\left(q^d\right)},
\end{equation}
where the last equality is due to \eqref{E:hdr}.
The theorem statement then follows from \eqref{E:C=B} and \eqref{E:BC=f} which together imply that $\o{f}_{d,r}(q^d) = \o{f}_{d,r}(1) \cdot (1-q^d) g_2(-q^r; q^d).$

For part \ref{T:prob:F|W}, we similarly have
\begin{equation*}
\Prob\left(\o{O}_r \o{O}_{d-r} \o{O}_d \mid W_{d,r}\right)
= \frac{\Prob\left(\o{O}_r \o{O}_{d-r} \o{O}_d \cap W_{d,r}\right)}{\Prob\left(W_{d,r}\right)}
= \frac{\Prob\left(\o{O}_r \o{O}_{d-r} \o{O}_d\right) \Prob\left(X_{d,r}\right)}{\Prob\left(W_{d,r}\right)}
= g_2\left(-q^r; q^d\right),
\end{equation*}
where the final equality follows from \eqref{E:NOProb} and the inverse of \eqref{E:W|X}.

\end{proof}

\section{Concluding Remarks}
\label{S:Conc}

It would be interesting to see a bijective proof of Theorem \ref{main1} and/or the fact that
\begin{equation*}
\o{\sB}_{d,r}(q) = \frac{\sB_{d,r}(q)}{(q^{2d}; q^{2d})_\infty},
\end{equation*}
which follows from comparing \eqref{Eprod} and Schur's theorem with Theorem \ref{main1}.  It would also be interesting to see if there are generalizations of Theorem \ref{main1} analogous to generalizations of Schur's theorem in \cite{AnSchur1,AnSchur2}.


\end{document}